\documentclass[11pt]{amsart} 
\usepackage{amssymb,amsmath,latexsym,enumerate,graphicx,bbm,mathptmx,microtype,cite,ifthen,color}
\allowdisplaybreaks
\usepackage{cmbright}

\newtheorem{theorem}{Theorem}[section] %  
\newtheorem*{lrc}{Lonely Runner Conjecture} 
\newtheorem*{slrc}{Shifted Lonely Runner Conjecture} 
\newtheorem*{dat}{Dirichlet's Approximation Theorem} 
\newtheorem{corollary}[theorem]{Corollary}
\newtheorem{proposition}[theorem]{Proposition}
\newtheorem{lemma}[theorem]{Lemma}

\newtheorem{exam}{Example}

\newtheorem*{rem}{Remarks}

\numberwithin{equation}{section}

\renewcommand\th{^{\text{th}}}

\newcommand\commentout[1]{}
\newcommand\Def[1]{{\bf #1}}

\newcommand\onenorm[1]{\left|\left|{#1}\right|\right|_1}
\newcommand\zdist[1]{\left|\left|{#1}\right|\right|_\ZZ}
\newcommand{\norm}[1]{\lVert #1\rVert}

\newcommand\lon{\operatorname{lon}} 
\newcommand\width{\operatorname{width}} 
\newcommand\flt{\operatorname{flt}}

\newcommand\ZZ{\mathbb{Z}}

\newcommand\RR{\mathbb{R}}

\newcommand\cZ{\mathsf{Z}}

\newcommand\ba{\mathbf{a}}
\newcommand\bb{\mathbf{b}}
\newcommand\bc{\mathbf{c}}
\newcommand\be{\mathbf{e}}
\newcommand\bm{\mathbf{m}}
\newcommand\bn{\mathbf{n}}
\newcommand\bs{\mathbf{s}}
\newcommand\bu{\mathbf{u}}

\newcommand\bx{\mathbf{x}}
\newcommand\by{\mathbf{y}}
\newcommand\bz{\mathbf{z}}
\newcommand\bzero{\mathbf{0}}
\newcommand\bone{\mathbf{1}}

\makeatletter % to repeat theorem numbers
\newtheorem*{rep@theorem}{\rep@title}\newcommand{\newreptheorem}[2]{%
\newenvironment{rep#1}[1]{%
\def\rep@title{\bf #2 \ref{##1}}%
\begin{rep@theorem}}%
{\end{rep@theorem}}}
\makeatother
\newreptheorem{theorem}{Theorem}

\begin{document}

\title{Lonely Runner Relations}

\author{Matthias Beck}
\address{Department of Mathematics\\
         San Francisco State University\\
         San Francisco, CA 94132\\
         U.S.A.}
\email{mattbeck@sfsu.edu}
\urladdr{https://matthbeck.github.io}

\author{Samuel Everett}
\address{Department of Computer Science\\
         University of Chicago\\
         Chicago, IL 60637\\
         U.S.A.}
\email{same@uchicago.edu}
\urladdr{https://www.samueleverett.com}

\dedicatory{Dedicated in honor of 150 combined years of the lonely runner J\"org Wills.}

\begin{abstract}
We study the \emph{Lonely Runner Conjecture} (LRC), conceived by J\"org M.~Wills in the 1960's: Given positive integers
$n_1, n_2, \dots, n_k$, there exists a positive real number $t$ such that for all $1 \le j \le k$ the distance of $t \,
n_j$ to the nearest integer is at least $\frac{ 1 }{ k+1 }$.  We prove that for any counterexample or tight instance
$\mathbf{n}$ of LRC, $\mathbf{m} \cdot \mathbf{n} = 0$ for some $\mathbf{m} \in \mathbb{Z}^k$ with $0 < \| \mathbf{m} \|_1 \le \min(2k+3, \ \frac{ k+1 }{ k-1 } \mathrm{flt}(k))$ where $\mathrm{flt}(k)$ denotes Khinchin's (1948)
\emph{flatness constant} limiting the lattice width of a $k$-dimensional convex body without interior integer points.
In other words, potential counterexamples to LRC lie on a finite set of hyperplanes in the parameter space.
Our proofs use Fourier analysis and a geometric reformulation of LRC, and our results generalize to the situation of
shifted lonely runners of varying measures of loneliness. Our results imply and generalize a theorem of Czerwi\'nski
(2012) that when we choose $\mathbf{n}$ at random then, with probability tending to 1, the measure of loneliness $\frac{ 1
}{ k+1 }$ can be replaced by $\frac 1 2 - \epsilon$.
\end{abstract}

\keywords{Lonely runner conjecture, lattice width, flatness constant, Fourier series, positive definite functions, zonotope.}

\subjclass[2010]{Primary 11J71; Secondary 42A16, 52B12, 52C07, 42A05.}
% 11J71 Distribution modulo one
% 42A16 Fourier coefficients, Fourier series of functions with special properties, special Fourier series
% 52B12 Special polytopes (linear programming, centrally symmetric, etc.)
% 52C07 Lattices and convex bodies in $n$ dimensions

\thanks{We thank Gennadiy Averkov, Ansgar Freyer, Serkan Ho\c{s}ten, Noah Kravitz, Dasha Poliakova, Paco Santos, and Matthias Schymura for helpful discussions and pointers to the literature.}

\date{22 September 2026}

\maketitle

% --------------------

\section{Introduction}

The theme of our paper is given by the following conjecture, raised by J\"org M.~Wills in the
1960's~\cite{willslonelyrunnerstart,willslonelyrunner}.

\begin{lrc}
Given distinct positive integers $n_1, \dots, n_k$, there exists a real number~$t$ such that for all $1 \le j \le k$, the distance of $t n_j$ to the nearest integer is at least $\frac{ 1 }{ k+1 }$.
\end{lrc}

Wills originally formulated this conjecture for \emph{real} numbers $n_1, \dots, n_k$, but it can be reduced to the rational and thus integral case~\cite{bohmanholzmankleitman,henzemalikiosis}.
The lower bound $\frac{ 1 }{ k+1 }$ is best possible, as the case $n_j = j$ for $1 \le j \le k$ and the following
classic result (see, e.g., \cite{cassels}) shows.

\begin{dat}
For every real number $t$ and positive integer $k$, there exists $q \in \{ 1, \dots, k \}$ such that the distance of
$tq$ to the nearest integer is at most $\frac{ 1 }{ k+1 }$. 
\end{dat}

Wills asked if this theorem can be improved by replacing $\{ 1, \dots, k \}$ with a different set of $k$ numbers, and
his conjecture says it cannot.
% Will's email from 16 April 2026: Of course i admired his work, in particular his famous theorem on Uniform distribution (Gleichverteilung). So, in the sixties, the idea came to me, to ask For the opposite, the worst-case Dioph. Approx. Of irrationals. That was the beginning…
The name \emph{Lonely Runner Conjecture} (LRC), introduced by Goddyn in~\cite{bieniagoddynetal}, stems from the charming
model of $k+1$ runners going at different constant speeds around a circular track of length~1 (having started at the
same place and time); the conjecture says that each of them will at some point have distance at least $\frac{ 1 }{
k+1 }$ to the other runners (and the vector $\bn$ records the differences of the velocities relative to the given runner).
% When Wills saw the lonely runner paper, he emailed one of the authors, Luis Goddyn, to congratulate him on “this wonderful and poetic name.” Goddyn’s reply: “Oh, you are still alive.”
LRC enjoys connections to various fields, e.g., number theory, harmonic analysis, graph theory, and discrete
geometry and has been proved for $k \le 13$ via the following philosophy.
Tao~\cite{taolonelyrunner} showed the first \emph{finite-checking result} for LRC, i.e., there are only finitely many possible counterexamples in any given dimension, with an
explicit bound on the velocities. This bound has subsequently been
improved~\cite{malikiosisSantosSchymura,giriKravitz}, which together with some nontrivial number
theory~\cite{rosenfeld}, was good enough to verify the current records~\cite{allikvere,sungkawichaiTrakulthongchai}.

Our first main result further limits the set of counterexamples via certain relations among the involved velocities.
To state it, we first introduce some terminology.
Let $\zdist{ \mbox{\ \ } } $ denote distance to the nearest integer and, for $\bn \in \ZZ^k_{>0}$ let
\[
  \lon(\bn) \, := \max_{t \in [0,1]} \min_{1 \le j \le k} \zdist{t n_j} \, , 
\]
and so LRC asserts $\lon(\bn) \ge \frac{1}{k+1}$ for all $\bn$ with distinct entries. 
Wills proved already in~\cite{willslonelyrunner} that $\lon(\bn) > \frac{ 1 }{ 2k }$, 
% $\ge \frac{ 1 }{ 2k }$ because $\{ t \in [0,1] : \, \zdist{ t n_j } < \frac{ 1 }{ 2k } \}$ is a union of open arcs of length $\frac{ 1 }{ k n_j }$ on $[0,1]$, viewed as a torus, and so these sets cannot cover $[0,1]$
and the current best lower bound is $\frac{ 1 }{ 2k } + \frac{ 1 }{ k^{ \frac 5 3 + o(1) } }$~\cite{bedert}.

We call $\bn$ a \Def{counterexample} if $\lon(\bn) < \frac{1}{k+1}$ and a \Def{tight instance} if $\lon(\bn) = \frac{1}{k+1}$.
Tight instances were studied, e.g., in~\cite{goddynwong}.

The \Def{lattice width} of a convex body $C \subset \RR^k$ is
\[
  \width(C) \, := \min_{ \bm \in \ZZ^k \setminus \bzero } \ \max_{ \ba, \bb \in C } \bm \cdot (\ba - \bb)
\]
and $C$ is \Def{hollow} if $C$ contains no integer points in its interior.
Khinchin's famous \emph{flatness theorem} \cite{khinchin} asserts the existence of a (minimal) number $\flt(k)$
such that any hollow convex body $C \subset \RR^k$ has $\width(C) \le \flt(k)$.
This quantity is the \Def{flatness constant} in dimension~$k$.
It has been extensively studied in convex geometry and integer optimization; we know, e.g., that $\flt(k) \in O(k
\log(2k)^3)$~\cite{reisrothvoss}, and that $k$-dimensional centrally symmetric hollow bodies have a maximum lattice width in $O(k \log k)$~\cite{banaszczyk}. Just as much seems to be open for exploration, e.g., we know very few $k$-dimensional polytopes with lattice width larger than~$k$~\cite{codenottisantos}.
We denote, as usual, $\onenorm \bm := |m_1| + \dots + |m_k|$.

\begin{theorem}\label{thm:mainlr}
Let $k \ge 2$ and let $\bn \in \ZZ^k_{>0}$ be a counterexample to or a tight instance of the Lonely Runner Conjecture, i.e., $\lon(\bn) \le \frac{1}{k+1}$.
Then 
\begin{enumerate}[{\sf (a)}]
  \item $\bm \cdot \bn = 0$ for some $\bm \in \ZZ^k$ with $0 < \onenorm{\bm} \le 2k+3$ and $m_1 + \dots + m_k$ odd, and
  \item $\bm \cdot \bn = 0$ for some $\bm \in \ZZ^k$ with $0 < \onenorm{\bm} \le \frac{k+1}{k-1} \flt(k)$.
\end{enumerate}
\end{theorem}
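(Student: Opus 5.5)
We argue by contraposition in both parts: assume $\bn$ satisfies no short relation $\bm\cdot\bn=0$ of the relevant kind, and show $\lon(\bn)>\frac{1}{k+1}$. The key consequence of having no short relations is equidistribution. Consider the curve $t\mapsto t\bn \bmod \ZZ^k$ on the torus $\RR^k/\ZZ^k$ for $t\in[0,1]$. For a trigonometric polynomial $f(\bx)=\sum_{\bm} \hat f(\bm) e^{2\pi i \bm\cdot\bx}$ we have
\[
  \int_0^1 f(t\bn)\,dt \;=\; \sum_{\bm:\ \bm\cdot\bn=0} \hat f(\bm).
\]
So if $\hat f$ is supported on $\{\onenorm{\bm}\le L\}$ and no nonzero $\bm$ in that support satisfies $\bm\cdot\bn=0$, the time average equals $\hat f(\bzero)$, the space average. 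Everything reduces to building a suitable test function.

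For (a), we take $f(\bx)=\prod_{j=1}^k g(x_j)$, where $g$ is a one-dimensional trigonometric polynomial satisfying $g(x)\le 0$ whenever $\zdist{x}\le \frac1{k+1}$. Then $f(t\bn)>0$ forces all $\zdist{tn_j}>\frac1{k+1}$, unless an even number of factors are negative. To avoid that parity issue, we instead use the indicator-type product with $g\le 0$ near $0$ and $g>0$ elsewhere and handle signs. Alternatively, we take $f=\sum_j g(x_j)$-type constructions.

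A cleaner route is the following. Choose $g(x)=\cos(2\pi x)$-type low-degree functions, so that $f$ has $\ell^1$-degree about $2k+3$. The parity condition on $m_1+\dots+m_k$ suggests shifting by $\frac12$: frequencies with even coordinate sum cancel in the relevant difference $f(\bx)-f(\bx+\tfrac12\bone)$. Concretely, I would construct $F(\bx)$ supported on odd-sum frequencies of $\ell^1$-norm at most $2k+3$ such that $F(t\bn)$ having zero time average contradicts $F<0$ on the set where some runner is within $\frac1{k+1}$ of $0$. Since $\hat F(\bzero)=0$, zero average plus $F\le0$ on the "not lonely" set forces $F\equiv 0$ along the curve, which is impossible. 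Building this $F$ (a positive-definite-style extremal polynomial, in the spirit of Fejér kernels) is the step I expect to be the main obstacle.

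For (b), we use the geometric reformulation. Failure of loneliness means that the line $\RR\bn$ misses the open cube $\frac1{k+1}\bone+(\frac{k-1}{k+1})(0,1)^k+\ZZ^k$. Projecting along $\bn$ gives a hollow convex body of dimension $k-1$; equivalently, the zonotope or cylinder obtained by sweeping the open cube of side $\frac{k-1}{k+1}$ along the direction of $\bn$ contains no lattice point. By the flatness theorem, this body has lattice width at most $\flt(k)$ in some direction $\bm$. If $\bm\cdot\bn\neq0$, the body is unbounded in direction $\bm$, so its width is infinite, a contradiction. Hence $\bm\cdot\bn=0$, and then the width equals $\frac{k-1}{k+1}\onenorm{\bm}$. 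This gives $\onenorm{\bm}\le\frac{k+1}{k-1}\flt(k)$. The tight case needs closed versus open care, handled by a limiting or perturbation argument on the cube's side length.
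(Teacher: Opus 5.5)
\textbf{Part (b)} is essentially the paper's argument. The one adjustment: the flatness theorem is stated for convex bodies, so instead of your unbounded cylinder the paper uses the compact zonotope $\cZ_{\lambda,\gamma}(\bn,\bzero)=[\lambda,1-\lambda]^k+[0,\gamma]\bn$ with $\lambda=\frac1{k+1}$ and $\gamma=\flt(k)$. Its width in direction $\bm$ is $(1-2\lambda)\onenorm{\bm}+\gamma|\bm\cdot\bn|$, so every $\bm$ with $\bm\cdot\bn\neq0$ has width exceeding $\flt(k)$. The tight case needs no perturbation. ``The line misses the open cube mod $\ZZ^k$'' is exactly hollowness of the closed body, and hollowness is what flatness requires. (Incidentally, if you kept the projection along $\bn$, the projected lattice has dual $\{\bm\in\ZZ^k:\bm\cdot\bn=0\}$, so flatness in dimension $k-1$ would give $\flt(k-1)$ in place of $\flt(k)$.)

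\textbf{Part (a)} has a genuine gap. You never construct $F$, and the $F$ you describe cannot exist. If $\widehat F$ is supported on vectors with odd coordinate sum, then $F(\bx+\frac12\bone)=-F(\bx)$. Take any $\bx$ with $\zdist{x_1}<\frac1{k+1}$ and $\zdist{x_2-\frac12}<\frac1{k+1}$. Then both $\bx$ and $\bx+\frac12\bone$ lie in the non-lonely set, so $F\le0$ there forces $F(\bx)\le0$ and $-F(\bx)\le0$. Hence $F$ vanishes on a nonempty open set, so $F\equiv0$ (this uses $k\ge2$). Separately, your closing step ``$F(t\bn)\equiv0$ is impossible'' is not justified. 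This is the same sign obstruction you hit with $\prod_j g(x_j)$: an exact frequency cutoff combined with a sign condition on the non-lonely set does not work.

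\textbf{The missing idea} is to use a nonnegative, non-polynomial window with exact support, and to extract the relation from cancellation rather than from a frequency cutoff. The paper takes $W_h(x)=\int_{\mathbb T}\psi_h(y)\psi_h(y-x+\frac12)\,dy$, with $h=\frac{k-1}{2(k+1)}$ and $\psi_h$ a cosine bump. This window is nonnegative and positive exactly on the strict lonely arc. Its Fourier coefficients are $(-1)^ra_r$ with $a_r=|\widehat{\psi_h}(r)|^2\ge0$, $\sum_r a_r=1$ and $\sum_r r^2a_r=\frac1{4h^2}$.

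If $\lon(\bn)\le\frac1{k+1}$, then $H(t)=\prod_jW_h(n_jt)\equiv0$, so every Fourier coefficient of $H$ vanishes. This says that on each level set $\bc\cdot\bn=q$, the weight $w(\bc)=\prod_j a_{c_j}$ of the even-sum $\bc$ equals that of the odd-sum $\bc$.

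The second-moment identity $\sum_{\bc}w(\bc)\norm{P\bc}_2^2=\frac{k-1}{4h^2}$, with $P$ the orthogonal projection onto $\bn^\perp$, then yields a level $q$ containing an even-sum $\bx$ and an odd-sum $\by$ with $\norm{P\bx}_2$ and $\norm{P\by}_2$ small. The difference $\bm=\bx-\by$ is a harmful relation with $\norm{\bm}_2\le\frac{2(k+1)}{\sqrt{k-1}}$. Cauchy--Schwarz, together with the fact that $\onenorm{\bm}$ is odd, gives $\onenorm{\bm}\le 2k+3$.
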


To illustrate this theorem within the parameter space, let
\[
  \diamond_k \, := \, \left\{ \bm \in \ZZ^k : \, 0 < \onenorm \bm \le \min \left( 2k+3, \ \frac{k+1}{k-1} \flt(k) \right) \right\} .
\]
Theorem~\ref{thm:mainlr} says that all potential counterexamples to, or tight instances of, the $k$-dimensional LRC 
live in the (finite) union of
hyperplanes $\bigcup \{ \bm \cdot \bx = 0 : \, \bm \in \diamond_k \}$ in $\RR^k$.
E.g., the tight instance $\bn = (1, \dots, k)$ featured in Dirichlet's theorem is a point in this union, as it lives on the hyperplane $2x_1 - x_2 = 0$.
Thus our results give a natural computational playground to look for counterexamples of LRC, as
well as new examples of tight lonely runner instances: check for possible such instances $\bn$ for which $\bm \cdot \bn = 0$ for some $\bm \in \ZZ^k$ with small 1-norm.\footnote{
Yes, absolutely: we tried, albeit only crudely so.
}
We remark that the qualitative description of Theorem~\ref{thm:mainlr}, though with arguably worse bounds on the size
of $\bm$, can with some work also be derived from \cite{giriKravitz} (combining Lemmas 3.3 and 4.4 and Theorem~4.1 in that paper).

We think of the two upper bounds (a) and (b) as complementary: the first gives a concrete formula, whereas the second
one depends on a quantity that is not known for a specific $k$, yet likely to be smaller than the first, and this
inequality holds in more general settings, as we will see below.
The two bounds stem from two quite different proof methods we employ in this paper. Each comes with
a (quite distinct) generalization of Theorem~\ref{thm:mainlr} and various consequences.

The first stems from Fourier analysis, carried out in Section~\ref{sec:fourier}.
The closest methodological precedent comes from Czerwi\'nski \cite{czerwinski}, who uses a convolution window and a product Fourier expansion whose surviving frequencies are additive relations among the velocities.
Tao also used Fourier estimates for multiple intersections to determine low-complexity additive relations from highly structured lonely runner
instances \cite{taolonelyrunner}, and Jensen employed Fourier series on the corresponding distance-threshold functions
\cite{jensen2026mixed}. Relatedly, Bedert used Riesz products and additive-dimension arguments to obtain substantially stronger loneliness bounds for dissociated velocity sets~\cite{bedert}.

The second bound in Theorem~\ref{thm:mainlr} stems from
the equivalent formulation for LRC in~\cite{lonelyrunnerpoly} via the existence of an integer point in a certain
$k$-dimensional polyhedron. It turns out that one can compute the lattice width of a zonotopal variant of this
polyhedron explicitly, and the flatness theorem then gives our result. 
We note that, consequently, we could replace $\flt(k)$ in the statement of Theorem~\ref{thm:mainlr}(b) by the (smaller)
flatness constant for symmetric polytopes or that for zonotopes.
This geometric setup and the resulting proofs are
in Section~\ref{sec:geometry}.
We remark that the idea of using lattice width or the flatness theorem in the context of lonely runner
polyhedron is not new and was employed in the afore-mentioned papers~\cite{henzemalikiosis,malikiosisSantosSchymura}. 

Our geometric results generalize (see Theorem~\ref{thm:maingeom} below) to the setting given by the following refined conjecture, also due to Wills.

\begin{slrc}
Given distinct positive integers $n_1, \dots, n_k$ and real numbers $s_1, \dots, s_k$, there exists a real number~$t$
such that for all $1 \le j \le k$, the distance of $t n_j + s_j$ to the nearest integer is at least $\frac{ 1 }{ k+1 }$.
\end{slrc}

This conjecture was recently disproved~\cite{blancocriadosantos}, with the smallest counterexample $\bn = (1, 2, 3,
4, 5)$; note again that this point satisfies the conclusion of Theorem~\ref{thm:mainlr}. 
An even more recent result~\cite{poliakova} quantifies the asymptotic failure of the shifted LRC (as $k$ grows).
Naturally, this situation gives rise to new questions, and we collect some of them in Section~\ref{sec:open}.

We will also see that Theorem~\ref{thm:maingeom} implies (and generalizes to the shifted lonely runner situation) 
a result of Czerwi\'nski~\cite{czerwinski} that when we choose
$\bn$ at random then, with probability tending to 1, the measure of loneliness $\frac{ 1 }{ k+1 }$ can be replaced by
$\frac 1 2 - \epsilon$; the detailed statement is in Corollary~\ref{cor:prob} below.
In fact, the relations $\bm \cdot \bn = 0$ for some $\bm \in \ZZ^k$ with bounded 1-norm play a role
in~\cite{czerwinski}, and so in some sense we give this role a geometric meaning in terms of lattice widths.

% --------------------

\section{Lonely Fourier Analysis}\label{sec:fourier}

The purpose of this section is to prove the following result, which establishes % the linear bound $2k+3$ of 
Theorem \ref{thm:mainlr}(a).
We first need some more terminology. 
We call $t\in\RR$ a \Def{strict lonely time} for $\bn=(n_1,\dots,n_k)$ if $\zdist{t n_j}>\frac{1}{k+1}$ for every $1\le j\le k$. Furthermore, we will call an integer vector $\bm\in\ZZ^k$ a \Def{relation} among the entries of $\bn$ if
$\bm\cdot\bn=0$.
A relation is \Def{harmful} if $m_1+\dots+m_k$ is odd.

\begin{theorem}\label{thm:main}
Let $k\ge2$, and let $n_1,\dots,n_k$ be distinct positive integers.
If $\bn$ has no strict lonely time, then there is a harmful relation $\bm$ such that
\[
\norm{\bm}_2\le \frac{2(k+1)}{\sqrt{k-1}}
\qquad\text{and}\qquad
\norm{\bm}_1\le 2k+3.
\]
In particular, if there is no harmful relation of $1$-norm at most $2k+3$, then $\bn$ has a strict lonely time.
\end{theorem}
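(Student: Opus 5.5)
We argue by contraposition with a Fourier-analytic averaging argument. Write $\delta=\frac1{k+1}$. For each $j$ take a nonnegative continuous $1$-periodic test function $f$ that vanishes on $\{\zdist{x}\le\delta\}$ and is positive on part of $\{\zdist{x}>\delta\}$. Then consider
\[
I=\int_0^1\prod_{j=1}^k f(tn_j)\,dt .
\]
If $\bn$ has no strict lonely time, the integrand vanishes identically, so $I=0$.

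Expanding each factor in its Fourier series $f(x)=\sum_{m}\hat f(m)e^{2\pi i m x}$ and integrating over $t$, only the frequency vectors with $\bm\cdot\bn=0$ survive:
\[
I=\sum_{\bm\in\ZZ^k,\ \bm\cdot\bn=0}\ \prod_j \hat f(m_j).
\]
The natural choice of $f$ makes the positive contribution come from the term $\bm=\bzero$ and forces the remaining terms to be controlled by $\norm{\bm}$.

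The concrete choice I would try is $f(x)=\cos^2(\pi x)$-type windows shifted to centre at $\frac12$, or better a function $g(x)=\max(0,-\cos 2\pi x - c)$-like, or $f(x)=\frac12-\zdist{x}$ shifted. In all cases the key property needed is that $\hat f(m)$ alternates in sign, like $(-1)^m|\hat f(m)|$, because the window is centred at $\frac12$. Then
\[
\prod_j\hat f(m_j)=(-1)^{\sum m_j}\prod_j|\hat f(m_j)|,
\]
so relations with even coordinate sum (the harmless relations) contribute nonnegatively. Only harmful relations can cancel the positive term $\hat f(0)^k$. If no harmful relation of small norm exists, it remains to show that the harmful relations of large norm contribute less than $\hat f(0)^k$.

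To make this workable I would use a finite trigonometric-polynomial minorant. I would replace $f$ by a nonnegative polynomial $P$ of degree $d$ centred at $\frac12$, for instance a Fejér-type kernel $P(x)=K_d(x-\tfrac12)$ minus a constant, or a product with its nonnegativity enforced. It must satisfy $P(x)\le 0$ for $\zdist{x}\le\delta$, so that $\int\prod P(tn_j)\,dt\le 0$ whenever there is no strict lonely time; this needs care with signs, so I would use $P\ge0$ supported in the lonely region. The product expansion then only involves $\bm$ with $|m_j|\le d$. Harmful terms with small $\norm{\bm}_1$ are excluded by hypothesis, and the bounds $\norm{\bm}_1\le 2k+3$ and $\norm{\bm}_2\le 2(k+1)/\sqrt{k-1}$ should come out of a weighted estimate. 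Concretely, take the simplest degree-one choice $P(x)=-\cos(2\pi x)-\cos(2\pi\delta)$, whose coefficients are $\hat P(0)=-\cos 2\pi\delta$ and $\hat P(\pm1)=-\frac12$. Then take a suitable power or combination whose coefficients decay geometrically in $|m|$, and compare $\sum_{\bm\ne0}\prod|\hat P(m_j)|$ with $|\hat P(0)|^k$. A second-moment (Parseval) computation, using $\sum_j m_j^2$ weights, produces an $\ell_2$ threshold; the $\ell_1$ threshold $2k+3$ would come from a parity/degree count.

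The main obstacle is choosing the test function so that both thresholds come out exactly. It must simultaneously be nonpositive on the non-lonely set, have sign-alternating coefficients, and have tail mass small enough that, under the hypothesis that no harmful relation has small norm, all harmful contributions with large norm cannot outweigh the zero frequency. I would first try to get the $\ell_2$ bound. I would do this by a quadratic-form argument: show $\int_0^1\sum_j \phi(tn_j)\,dt$-type inequalities force some harmful $\bm$ with $\sum m_j^2\le 4(k+1)^2/(k-1)$. Then I would derive the $\ell_1$ bound by Cauchy–Schwarz plus parity refinements, and I expect this last step to need a separate combinatorial argument.
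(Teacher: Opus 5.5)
There is a genuine gap. The decisive step is the one you defer: showing that, when there are no small harmful relations, the harmful terms on the slice $\bm\cdot\bn=0$ contribute less than the constant term $\hat f(0)^k$. The proposal never proves this. The mechanism you suggest, a second-moment tail bound compared with $\hat f(0)^k$, cannot give thresholds polynomial in $k$, because the constant term is exponentially small relative to the total coefficient mass.

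Write the coefficients as $(-1)^ra_r$ with $a_r\ge0$ and $\sum_ra_r=1$. For the cosine-bump autocorrelation window one has $a_0=8h/\pi^2<\frac12$ and $\sum_rr^2a_r=\frac1{4h^2}$. In fact any autocorrelation window with $\psi$ supported on an interval of length $h$ has $a_0=(\int\psi)^2\le h\int\psi^2=h$ by Cauchy--Schwarz, so this is not an artifact of the choice. Hence $a_0^k<2^{-k}$, while Markov only bounds the total weight of $\{\bc:\norm{\bc}_2>R\}$ by $\frac{k}{4h^2R^2}$. You would need $R$ exponential in $k$. Equivalently, averaging over the zero fiber alone yields an odd $\bc$ with $\bc\cdot\bn=0$ and $\norm{\bc}_2^2\le\frac{k}{4h^2a_0^k}$.

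The trigonometric-polynomial variant is not viable either. A trigonometric polynomial cannot be supported in the lonely region, since vanishing on an arc forces it to vanish identically. A sign-changing $P$ breaks the product argument. Your degree-one choice is positive only on $(\frac12-\delta,\frac12+\delta)$ and negative elsewhere, so $\prod_jP(tn_j)$ is positive at non-lonely times where an even number of factors are negative.

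The missing idea is to use \emph{every} Fourier coefficient of $H(t)=\prod_jW(n_jt)$, not just the constant one. Under your hypothesis $H\equiv0$, so all its coefficients vanish. With $\widehat W(r)=(-1)^ra_r$ and $w(\bc)=\prod_ja_{c_j}$, this says that for each $q\in\ZZ$ the even and odd parts of the fiber $\{\bc:\bc\cdot\bn=q\}$ carry the same weight $s_q$. Moreover $\sum_{q}s_q=\frac12$, a constant rather than $a_0^k$.

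A harmful relation is then obtained as a difference $\bm=\bx-\by$ of an even $\bx$ and an odd $\by$ from the same fiber. Since $\bm\in\bn^\perp$, only the projections $P\bx,P\by$ onto $\bn^\perp$ matter. The projected second moment is $\sum_{\bc}w(\bc)\norm{P\bc}_2^2=\frac{k-1}{4h^2}$. Averaging over $q$ with weights $s_q$ gives $\norm{\bm}_2^2\le2\norm{P\bx}_2^2+2\norm{P\by}_2^2\le\frac{k-1}{h^2}$. With $h=\frac{k-1}{2(k+1)}$ this is exactly the stated $\ell_2$ bound; the drop from $k$ to $k-1$ is what makes the constant come out.

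This argument needs a window that is supported exactly on $\zdist{x-\frac12}<h$, has coefficients $(-1)^ra_r$ with $a_r\ge0$, and has $\sum_rr^2a_r=\frac1{4h^2}$. The paper uses the autocorrelation of the cosine bump on an interval of length $h$, shifted by $\frac12$. Your other candidates fail one of these requirements: triangle windows have $\sum_rr^2a_r=\infty$, and truncated cosines need not have sign-alternating coefficients.

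Finally, the $\ell_1$ bound needs no separate combinatorial argument. One has $\norm{\bm}_1\le\sqrt k\,\norm{\bm}_2<2k+5$, and $\norm{\bm}_1\equiv\sum_jm_j$ is odd, so $\norm{\bm}_1\le2k+3$.
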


The proof intuition is as follows. Beginning with Lemma~\ref{lem:cosine} below, we identify a \emph{window function}
$W_h$ that measures a runner's position on the track (namely the fractional part of $n_jt$), that is positive when a
runner is in the region that leaves the origin (i.e., the zero velocity runner) lonely (i.e., $W_h(n_jt)>0$ when $\{n_jt\}
\in (\frac 1 {k+1}, \frac k {k+1})$). Each runner can be thought of as having its own window function. If the product of the window
functions, denoted $H(t)$, is positive for some time $t$, then the lonely runner conjecture holds in strict form. The
technical mass of this section is focused on an elementary Fourier-analytic study of this function $H(t)$, which helps us understand when strict lonely times may occur.
This strategy is very similar to that employed by Czerwi\'nski in \cite{czerwinski}, however, the technical details of
our analysis diverge, which leads us to our different conclusions. Specifically, we place emphasis on harmful relations,
which are the terms of the series that draw the value of $H(t)$ down; we find this viewpoint quite helpful.

Write $\mathbb T=\RR/\ZZ$, identify it with $[- \frac 1 2, \frac 1 2)$ when convenient, and let $\norm{x}_{\mathbb T}$ denote the distance from $x$ to $0$ on the circle.
For $f\in L^1(\mathbb T)$, let
\[
\widehat f(r) \, := \int_{\mathbb T}f(x)e^{-2\pi i rx}\,dx \, .
\]

The following lemma sets up much of the basic structure required in the remainder of the proof.

\begin{lemma}\label{lem:cosine}
For $0<h< \frac 1 2$, define $\psi_h$ on $[- \frac 1 2, \frac 1 2)$ by
\[
\psi_h(x) \, := \,
\begin{cases}
\sqrt{\frac 2 h}\,\cos \left( \frac{ \pi x } h \right) & \text{ if } |x|\le \frac h 2 \, ,\\
0 & \text{ if } \frac h 2<|x|\le \frac 1 2 \, ,
\end{cases}
\]
and $W_h$ by
\begin{equation}\label{eq:cosine-window}
W_h(x) \, :=\int_{\mathbb T}\psi_h(y) \, \psi_h\!\left(y-x+\tfrac12\right)\,dy.
\end{equation}
Then $W_h$ is continuous and nonnegative, and
\begin{equation}\label{eq:window-support}
W_h(x)>0 \quad\Longleftrightarrow\quad \norm{x-\tfrac12}_{\mathbb T}<h \, .
\end{equation}
Moreover,
\begin{equation}\label{eq:cosine-series}
W_h(x) \, = \, \sum_{r\in\ZZ}(-1)^r a_r \, e^{2\pi i rx}, \quad a_r= \left|\widehat{\psi_h}(r) \right|^2,
\end{equation}
where the coefficients satisfy
\begin{equation}\label{eq:cosine-sums}
\sum_{r\in\ZZ}a_r=1 \qquad \text{and} \qquad \sum_{r\in\ZZ}r^2a_r=\frac1{4h^2} \, .
\end{equation}
The Fourier series in \eqref{eq:cosine-series} converges absolutely and uniformly.
\end{lemma}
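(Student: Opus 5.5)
We treat $W_h$ as an autocorrelation of the bump $\psi_h$, shifted by $\tfrac12$, and read off all the claims from that structure. Substituting $y=u+x-\tfrac12$ in \eqref{eq:cosine-window} shows
\[
W_h(x)=(\psi_h*\widetilde\psi_h)\bigl(x-\tfrac12\bigr),
\qquad \widetilde\psi_h(u):=\psi_h(-u)=\psi_h(u),
\]
since $\psi_h$ is even. Continuity follows because the convolution of two $L^2$ functions on $\mathbb T$ is continuous. Nonnegativity follows because $\psi_h\ge0$: the cosine is nonnegative on $|x|\le h/2$.

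For the support statement \eqref{eq:window-support}, put $z=x-\tfrac12$. The integrand $\psi_h(y)\psi_h(y-z)$ is nonnegative. It is strictly positive exactly when both $y$ and $y-z$ lie in the open arc $|\cdot|<h/2$, where the cosine is strictly positive. Because $h<\tfrac12$, these two arcs do not wrap around the circle. Their intersection therefore has positive measure if and only if $\norm{z}_{\mathbb T}<h$. This gives $W_h(x)>0 \iff \norm{x-\tfrac12}_{\mathbb T}<h$.

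For the series \eqref{eq:cosine-series}, use the convolution theorem on $\mathbb T$:
\[
\widehat{\psi_h*\widetilde\psi_h}(r)=\widehat{\psi_h}(r)\,\overline{\widehat{\psi_h}(r)}=|\widehat{\psi_h}(r)|^2=a_r,
\]
since $\psi_h$ is real. The shift by $\tfrac12$ multiplies the $r$th coefficient by $e^{-\pi i r}=(-1)^r$. Absolute convergence comes from $\sum_r a_r=\norm{\psi_h}_2^2<\infty$ by Parseval. The Weierstrass M-test then gives uniform convergence. The uniform limit is a continuous function with the same Fourier coefficients as the continuous function $W_h$, so the two agree everywhere.

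The identities in \eqref{eq:cosine-sums} are both Parseval computations.
\begin{itemize}
\item First, $\sum_r a_r=\norm{\psi_h}_2^2=\frac2h\int_{-h/2}^{h/2}\cos^2(\pi x/h)\,dx=\frac2h\cdot\frac h2=1$.
\item Second, $\psi_h$ is continuous on $\mathbb T$ (it vanishes at $\pm h/2$) and piecewise $C^1$, hence absolutely continuous. So $\widehat{\psi_h'}(r)=2\pi i r\,\widehat{\psi_h}(r)$, and Parseval gives
\[
\sum_r r^2a_r=\frac{1}{4\pi^2}\norm{\psi_h'}_2^2=\frac1{4\pi^2}\cdot\frac2h\cdot\frac{\pi^2}{h^2}\int_{-h/2}^{h/2}\sin^2(\pi x/h)\,dx=\frac1{4\pi^2}\cdot\frac{2\pi^2}{h^3}\cdot\frac h2=\frac1{4h^2}.
\]
\end{itemize}

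The only points needing care are the justifications:
\begin{itemize}
\item absolute continuity of $\psi_h$, for the derivative rule;
\item strict positivity on the overlap together with the no-wraparound condition $h<\tfrac12$, for the support claim.
\end{itemize}
Neither is a real obstacle; the rest is routine.
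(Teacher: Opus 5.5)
Your proposal is correct and follows essentially the same route as the paper. Both arguments read continuity, nonnegativity and the support condition off the overlap/autocorrelation structure, and obtain the coefficients $(-1)^r|\widehat{\psi_h}(r)|^2$ from the shift by $\tfrac12$. Both then use Parseval for $\sum_r a_r=1$, which gives absolute and uniform convergence and identifies the series with $W_h$ via uniqueness of Fourier coefficients. Finally, both apply Parseval to the derivative of $\psi_h$ (weak, or equivalently absolutely continuous because $\psi_h(\pm\tfrac h2)=0$) to get $\sum_r r^2a_r=\frac1{4h^2}$.
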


\begin{proof}
The integral in \eqref{eq:cosine-window} measures the overlap of two nonnegative cosine bumps ($\psi_h$), one centered
at $0$ and the other at $x- \frac 1 2$. Each bump is supported on an interval of radius $\frac h 2$ and is positive in
the interior of that interval. Their supports therefore overlap on a set of positive length exactly when $\norm{x- \frac 1 2}_{\mathbb T}<h$. This proves \eqref{eq:window-support} and the nonnegativity of $W_h$. Continuity follows from continuity of translations in $L^2(\mathbb T)$ and Cauchy--Schwarz.

The unshifted overlap function
\[
x \, \longmapsto\int_{\mathbb T}\psi_h(y) \, \psi_h(y-x)\,dy
\]
has $r$-th Fourier coefficient $|\widehat{\psi_h}(r)|^2$.
Replacing $x$ by $x- \frac 1 2$ multiplies that coefficient by $e^{-2\pi i r/2}=(-1)^r$.
Hence
\[
\widehat{W_h}(r) \, = \, (-1)^r \left|\widehat{\psi_h}(r) \right|^2= \, (-1)^r a_r \, .
\]
Since $\psi_h$ is real, $a_{-r}=a_r$.

Direct integration gives
\[
\int_{\mathbb T}\psi_h(x)^2\,dx \, = \, \frac2h\int_{-h/2}^{h/2}\cos^2 \left(\frac{ \pi x }{ h } \right) dx \, = \, 1 \, .
\]
Writing $\psi_h'$ for the ordinary derivative inside the support and $0$ outside, direct integration also gives
\[
\int_{\mathbb T}|\psi_h'(x)|^2\,dx \, = \, \frac{2\pi^2}{h^3}\int_{-h/2}^{h/2}\sin^2 \left( \frac{ \pi x } h \right) dx \, = \,
\frac{\pi^2}{h^2} \, .
\]
Parseval applied to the first identity gives $\sum_r a_r=1$.
In particular, the Fourier series in \eqref{eq:cosine-series} converges absolutely and uniformly. Its sum has the same Fourier coefficients as the continuous function $W_h$, so uniqueness of Fourier coefficients gives \eqref{eq:cosine-series}.

Since $\psi_h(\pm \frac h 2)=0$, the piecewise derivative above is the weak derivative of $\psi_h$.
Integration by parts on $[- \frac h 2, \frac h 2]$ yields
\[
\widehat{\psi_h'}(r) \, = \ 2\pi i r \, \widehat{\psi_h}(r) \, .
\]
Parseval applied to $\psi_h'$ finally gives
\[
\sum_{r\in\ZZ}r^2a_r \, = \, \frac1{4\pi^2}\int_{\mathbb T}|\psi_h'(x)|^2\,dx \, = \, \frac1{4h^2} \, . \qedhere
\]
\end{proof}

\begin{proof}[Proof of Theorem~\ref{thm:main}]
Set
\[
h:=\frac{k-1}{2(k+1)} \, , 
\qquad\text{ so that }\qquad
\left(\frac1{k+1}, \ \frac k{k+1}\right) = \left(\frac12-h, \ \frac12+h\right).
\]
Let $W_h$ and $a_r$ be as in Lemma~\ref{lem:cosine}, and define
\[
H(t) \, := \, \prod_{j=1}^k W_h(n_jt) \, .
\]
By \eqref{eq:window-support}, $H(t)>0$ exactly when $t$ is a strict lonely time.
Thus, under the hypothesis of the theorem, $H$ is identically zero. Since the velocities are integers, $H$ is $1$-periodic, so we view it as a function on $\mathbb T$.

For $\bc=(c_1,\dots,c_k)\in\ZZ^k$, put
\[
w(\bc) \, := \, \prod_{j=1}^k a_{c_j} \, .
\]
Using \eqref{eq:cosine-series} in each factor of $H(t)$ gives
\begin{equation}\label{eq:product-expansion}
H(t) \, =\sum_{\bc\in\ZZ^k}(-1)^{c_1+\cdots+c_k}w(\bc)e^{2\pi i(\bc\cdot \bn)t}.
\end{equation}
This expansion is absolutely and uniformly convergent because
\begin{equation}\label{eq:total-weight}
\sum_{\bc\in\ZZ^k}w(\bc) \, = \, \left(\sum_{r\in\ZZ}a_r\right)^k= \, 1 \, .
\end{equation}

For each $q\in\ZZ$, let
\[
E_q \, := \, \left\{\bc\in\ZZ^k: \, \bc\cdot \bn=q,\ \textstyle\sum_j c_j\text{ is even}\right\}
\]
and
\[
O_q \, := \, \left\{\bc\in\ZZ^k: \, \bc\cdot \bn=q,\ \textstyle\sum_j c_j\text{ is odd} \right\} .
\]
Since $H$ is identically zero, its $q$-th Fourier coefficient vanishes.
Equation \eqref{eq:product-expansion} therefore yields
\[
\sum_{\bc\in E_q}w(\bc) \, =\sum_{\bc\in O_q}w(\bc) \, .
\]
Let $s_q$ denote this common value.
Every $\bc\in\ZZ^k$ belongs to exactly one of the sets $E_q$ or $O_q$, so \eqref{eq:total-weight} gives
\begin{equation}\label{eq:sq-sum}
\sum_{q\in\ZZ}s_q \, = \, \frac12 \, .
\end{equation}

By Cauchy--Schwarz and \eqref{eq:cosine-sums}, $\sum_r|r|a_r<\infty$. Since $a_{-r}=a_r$, it follows that
\begin{equation}\label{eq:first-sum}
\sum_{r\in\ZZ}r a_r \, = \, 0 \, .
\end{equation}
Let $\bu=(u_1,\dots,u_k)$ be a unit vector in $\RR^k$.
Expanding the square and using \eqref{eq:cosine-sums} and \eqref{eq:first-sum}, we obtain
\begin{equation}\label{eq:weighted-square}
\sum_{\bc\in\ZZ^k}w(\bc)(\bu\cdot \bc)^2 \, = \, \sum_{j=1}^k u_j^2 \sum_{r\in\ZZ}r^2a_r + 2\sum_{j<\ell}u_ju_\ell\left(\sum_{r\in\ZZ}r a_r\right)^2
 = \, \frac1{4h^2} \, .
\end{equation}
All rearrangements here are justified by the finiteness of $\sum_r|r|a_r$ and $\sum_r r^2a_r$.

Now let $P:\RR^k\rightarrow \RR^k$ be the orthogonal projection onto
\[
\bn^\perp \, := \, \left\{\bz\in\RR^k: \, \bz\cdot \bn=0\right\}.
\]
This space has dimension $k-1$.
Summing \eqref{eq:weighted-square} over an orthonormal basis of $\bn^\perp$ gives
\begin{equation}\label{eq:projected-square}
\sum_{\bc\in\ZZ^k}w(\bc)\norm{P\bc}_2^2 \, = \, \frac{k-1}{4h^2} \, .
\end{equation}
Together with \eqref{eq:sq-sum}, this implies that for some $q$ with $s_q>0$,
\begin{equation}\label{eq:good-q}
\frac1{s_q}\left(\sum_{\bc\in E_q}w(\bc)\norm{P\bc}_2^2 \, +\sum_{\bc\in O_q}w(\bc)\norm{P\bc}_2^2 \right) \, \le \,
\frac{k-1}{2h^2} \, .
\end{equation}
Indeed, if the strict reverse inequality held for every $q$ with $s_q>0$, multiplying by $s_q$ and summing over $q$ would contradict \eqref{eq:projected-square}.

Now we notice that by weighted averaging there is an $\bx\in E_q$ such that
\[
\norm{P\bx}_2^2 \, \le \, \frac1{s_q}\sum_{\bc\in E_q}w(\bc)\norm{P\bc}_2^2 
\]
and $\by\in O_q$ such that
\[
\norm{P\by}_2^2 \, \le \, \frac1{s_q}\sum_{\bc\in O_q}w(\bc)\norm{P\bc}_2^2 \, .
\]
Set $\bm=\bx-\by$.
Since $\bx\cdot \bn=\by\cdot \bn=q$, we have $\bm\cdot \bn=0$. The coordinate sum of $\bx$ is even and that of $\by$ is
odd, so $\sum_j m_j$ is odd (i.e., $\bm$ is a harmful relation). Moreover, $\bm\in \bn^\perp$, whence
\[
\bm \, = \, P(\bx-\by) \, = \, P\bx-P\by 
\]
and, via \eqref{eq:good-q},
\[
\norm{\bm}_2^2 \, \le \, 2\norm{P\bx}_2^2+2\norm{P\by}_2^2 \, \le \, \frac{k-1}{h^2} \, .
\]
Therefore
\begin{equation}\label{eq:l2-bound}
\norm{\bm}_2 \, \le \, \frac{\sqrt{k-1}}h \, = \, \frac{2(k+1)}{\sqrt{k-1}} \, .
\end{equation}

It remains to bound the 1-norm of $\bm$. By Cauchy--Schwarz and \eqref{eq:l2-bound},
\[
\norm{\bm}_1 \, \le \, \sqrt k\,\norm{\bm}_2 \, \le \, 2(k+1)\sqrt{\frac k{k-1}} \, .
\]
Using $\sqrt{1+x}<1+ \frac x 2$ for $x>0$, we obtain
\[
\norm{\bm}_1
\, < \, 2(k+1)\left(1+\frac1{2(k-1)}\right) \, = \, 2k+3+\frac2{k-1}
\, \le \, 2k+5 \, .
\]
For every integer $z$, we have $|z|\equiv z\pmod2$. Hence
\[
\norm{\bm}_1 \, = \, \sum_j|m_j| \, \equiv \, \sum_j m_j\pmod2 \, ,
\]
so $\norm{\bm}_1$ is an odd integer. Since it is strictly less than the odd integer $2k+5$, it is at most $2k+3$.
\end{proof}

% --------------------

\section{Lonely Runner Geometry}\label{sec:geometry}

We now set up notation to capture the shifted lonely runner setup.
For $\bn \in \ZZ^k_{ >0 }$ and $\bs \in \RR^k$, let
\[
  \lon(\bn, \bs) \, := \, \max_{ t \in [0,1] } \, \min_{ 1 \le j \le k } \zdist{ t n_j + s_j } \, ,
\]
which we may interpret as the measure of loneliness of the runner configuration with velocities~$\bn$ and starting
points~$\bs$.

Let $0 < \lambda < \frac 1 2$ and $\gamma \ge 1$.
For given $\bn \in \ZZ^k_{ >0 }$ and $\bs \in \RR^k$, we consider the zonotope (Minkowski sum of line segments)
\begin{align}
  \cZ_{\lambda, \gamma}(\bn, \bs) \, :=& \, \bs + [\lambda, 1 - \lambda]^k + [0,\gamma] \, \bn \nonumber \\
  =& \, \bs + \lambda \, \bone + \sum_{ j=1 }^{ k } [0, 1 - 2 \lambda] \, \be_j + [0,\gamma] \, \bn \, ,
\label{eq:zonostructure}
\end{align}
where $\be_j$ denotes the $j\th$ unit vector and $\bone := \be_1 + \dots + \be_k$,
and its infinite counterpart
\[
  \cZ_{\lambda, \infty}(\bn, \bs) \, := \, \bs + [\lambda, 1 - \lambda]^k + \RR \, \bn \, .
\]
For $\lambda = \frac{ 1 }{ k+1 }$ and $\bs = \bzero$, this is the \emph{lonely runner polyhedron} studied in~\cite{lonelyrunnerpoly}; it is also closely related to the earlier zonotopes constructed
in~\cite{henzemalikiosis}.
The polyhedral connection to the lonely-runner setup is not hard to see.
 
\begin{proposition}\label{prop:equivalencies}
Given $\bn \in \ZZ^k_{ >0 }$, $\bs \in \RR^k$, $0 < \lambda < \frac 1 2$, and $\gamma \ge 1$, the following are equivalent:
\begin{enumerate}[{\sf (a)}]
  \item $\lon(\bn, \bs) \ge \lambda$;
  \item $\cZ_{\lambda, \infty}(\bn, \bs) \cap \ZZ^k \ne \varnothing$;
  \item $\cZ_{\lambda, \gamma}(\bn, \bs) \cap \ZZ^k \ne \varnothing$.
\end{enumerate}
We have equality in (a) if and only if in addition $\cZ_{\lambda, \infty}(\bn, \bs)$ (and, equivalently, $\cZ_{\lambda, \gamma}(\bn, \bs)$) is hollow, i.e., its integer points are all on the boundary.
\end{proposition}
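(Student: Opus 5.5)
We prove the cycle (a) $\Rightarrow$ (b) $\Rightarrow$ (c) $\Rightarrow$ (a), and then treat the equality statement separately. The key dictionary is this: for $t \in \RR$ and $\bz \in \ZZ^k$, the point $\bz - t\bn$ lies in $\bs + [\lambda, 1-\lambda]^k$ if and only if $z_j - t n_j - s_j \in [\lambda, 1-\lambda]$ for every $j$. Since the $z_j$ are integers, this is the same as $-(t n_j + s_j)$ lying in $[\lambda, 1-\lambda]$ modulo $1$, i.e., $\zdist{t n_j + s_j} \ge \lambda$ for all $j$.

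For (a) $\Rightarrow$ (b), choose $t$ attaining $\lon(\bn,\bs)$, so that $\zdist{tn_j+s_j}\ge\lambda$ for all $j$. Let $z_j$ be the unique integer with $z_j - tn_j - s_j \in [\lambda,1-\lambda]$; it is unique because $\lambda<\frac12$. Then
\[
\bz \in \bs + [\lambda,1-\lambda]^k + t\bn \subseteq \cZ_{\lambda,\infty}(\bn,\bs).
\]
For (b) $\Rightarrow$ (c), take $\bz = \bs + \bx + \tau\bn$ with $\bx\in[\lambda,1-\lambda]^k$ and $\tau\in\RR$. Write $\tau = \ell + \tau'$ with $\ell = \lfloor \tau\rfloor \in \ZZ$ and $\tau'\in[0,1)$. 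Then $\bz - \ell\bn$ is an integer point in $\bs + [\lambda,1-\lambda]^k + [0,1)\bn \subseteq \cZ_{\lambda,\gamma}(\bn,\bs)$, using $\gamma\ge1$. Here we use crucially that $\bn$ is an integer vector. The step (c) $\Rightarrow$ (a) is the dictionary read backwards: from an integer point $\bs+\bx+\tau\bn$ we get $\zdist{-\tau n_j - s_j}$... more precisely $\zdist{(-\tau)n_j + s_j}$ up to sign conventions. Since $\zdist{\cdot}$ is even, we should use time $t=-\tau$ and shift $s_j$ consistently. In fact $z_j - s_j - \tau n_j = x_j$ gives $\zdist{\tau n_j + s_j} = \zdist{x_j} \ge \lambda$, so $t = \tau \bmod 1$ works. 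I will fix the signs carefully so that the same parametrization is used throughout. Periodicity in $t$ with period $1$ reduces everything to $t\in[0,1]$, so $\lon\ge\lambda$.

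For the equality clause, note first that $\lon(\bn,\bs)$ is a max–min of continuous functions, so it is attained. We have $\lon(\bn,\bs)>\lambda$ if and only if there is $t$ with $\zdist{tn_j+s_j}>\lambda$ for all $j$. By the same dictionary, with open intervals in place of closed ones, this holds if and only if some integer point lies in $\bs + (\lambda,1-\lambda)^k + \RR\bn$. The main point, and the step needing the most care, is to identify this set with the interior of $\cZ_{\lambda,\infty}(\bn,\bs)$. The interior of a Minkowski sum of an open cube and a line is open and contained in the sum of the closed cube and the line. Conversely, a point $\by + \tau\bn$ with $\by$ on the boundary of the cube is not interior when it has no representation with $\by'$ in the open cube. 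This follows because $\cZ_{\lambda,\infty}(\bn,\bs)$ is the preimage of the projected cube under the projection along $\bn$, and the interior of a linear image of a full-dimensional cube is the image of its interior.

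Hence $\lon = \lambda$ if and only if $\lon\ge\lambda$ and the interior contains no integer point, i.e., the polyhedron is hollow. For the bounded version, an interior integer point of $\cZ_{\lambda,\infty}$ can be translated by an integer multiple of $\bn$ into $\bs+(\lambda,1-\lambda)^k+[0,1)\bn$. If $\gamma>1$ this already lies in $\operatorname{int}\cZ_{\lambda,\gamma}(\bn,\bs)$. If $\gamma=1$ and $\tau'=0$, I will instead shift so that the parameter lies in $(0,1)$ after a small perturbation of the cube point. The argument is this: the interior condition is open, so we can move the point along $\bn$ slightly while keeping it in the open cube sum, which keeps it interior to the zonotope. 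Conversely, interior points of $\cZ_{\lambda,\gamma}$ are interior points of $\cZ_{\lambda,\infty}$ trivially.
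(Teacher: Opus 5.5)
Your proposal is correct and follows essentially the paper's route: the coordinatewise dictionary between times $t$ with $\min_j \zdist{tn_j+s_j}\ge\lambda$ and integer points of $\bs+[\lambda,1-\lambda]^k+t\bn$, together with translating by integer multiples of $\bn$ to pass between $\cZ_{\lambda,\infty}(\bn,\bs)$ and $\cZ_{\lambda,\gamma}(\bn,\bs)$. You also justify the equality/hollowness clause, which the paper leaves implicit, via $\operatorname{int}\cZ_{\lambda,\infty}(\bn,\bs)=\bs+(\lambda,1-\lambda)^k+\RR\bn$; your separate treatment of the case $\gamma=1$, $\tau'=0$ is unnecessary, because $\bs+(\lambda,1-\lambda)^k+\tau\bn$ is an open subset of $\cZ_{\lambda,\gamma}(\bn,\bs)$ for every $\tau\in[0,\gamma]$.
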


In particular, LRC is equivalent to the existence of $\bn \in \ZZ^k_{ >0 }$ with distinct
entries such that $\cZ_{\frac{ 1 }{ k+1 } , \infty}(\bn, \bzero) \cap \ZZ^k \ne \varnothing$ (equivalently,
$\cZ_{\lambda, \gamma}(\bn, \bzero) \cap \ZZ^k \ne \varnothing$).

\begin{proof}
This is implicitly shown in~\cite{lonelyrunnerpoly}; for the sake of completeness, we give a self-contained proof
here.

To show the equivalence of (a) and (b), we note that, by definition, $\lon(\bn, \bs) \ge \lambda$ if and only if 
\[
  t \, \bn + \bs \, = \, \bm + [\lambda, 1-\lambda]^k
\]
for some $t \in \RR$ and some $\bm \in \ZZ^k$, that is,
\[
  \bs - [\lambda, 1-\lambda]^k + \RR \, \bn 
\]
contains an integer point. But this, in turn, is equivalent to $\cZ_{\lambda, \infty}(\bn, \bs)$ containing an
integer point.

The equivalence of (b) and (c) follows from
\[
  \cZ_{\lambda, \infty}(\bn, \bs) \, = \bigcup_{ j \in \ZZ } \left( \cZ_{\lambda, 1}(\bn, \bs) + j \, \bn \right)
\]
and the fact that the integer-point structure of each $\cZ_{\lambda, 1}(\bn, \bs) + j \, \bn$ looks identical.
\end{proof}

Here is the geometric version of Theorem~\ref{thm:mainlr}(b) in full generality.

\begin{theorem}\label{thm:maingeom}
Let $0 < \lambda < \frac 1 2$ and $\gamma \ge 1$.
If $\cZ_{\lambda, \gamma}(\bn, \bs)$ is hollow then $\bm \cdot \bn = 0$ for some $\bm \in \ZZ^k$ with $0 < \onenorm \bm \le \frac{ \flt(k) }{ (1 - 2 \lambda) }$.
\end{theorem}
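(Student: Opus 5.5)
The plan is to apply the flatness theorem directly to the hollow zonotope $\cZ_{\lambda,\gamma}(\bn,\bs)$. Since it is a hollow convex body in $\RR^k$, Khinchin's theorem provides a nonzero $\bm \in \ZZ^k$ with
\[
  \max_{\ba,\bb \in \cZ_{\lambda,\gamma}(\bn,\bs)} \bm\cdot(\ba-\bb) \, \le \, \flt(k) \, .
\]
So everything reduces to computing the width of a zonotope in a fixed direction $\bm$, which is explicit.

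For a Minkowski sum of segments $\sum_i [0,1]\,\bz_i$ (plus a translation), the width in direction $\bm$ is $\sum_i |\bm\cdot \bz_i|$. Using the generator description \eqref{eq:zonostructure}, the generators are $(1-2\lambda)\be_j$ for $1\le j\le k$ and $\gamma\,\bn$. Hence the width of $\cZ_{\lambda,\gamma}(\bn,\bs)$ in direction $\bm$ is
\[
  (1-2\lambda)\sum_{j=1}^k |m_j| + \gamma\,|\bm\cdot\bn| \, = \, (1-2\lambda)\onenorm{\bm} + \gamma\,|\bm\cdot\bn| \, .
\]
The shift $\bs + \lambda\bone$ does not affect the width. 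Choose $\bm$ to be the minimizer from the flatness theorem, so that this quantity is at most $\flt(k)$.

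Two facts remain to be checked. First, $\bm\cdot\bn = 0$. Since $\bm$ and $\bn$ are integer vectors, if $\bm\cdot\bn \ne 0$ then $|\bm\cdot\bn|\ge 1$, and the $\bn$-term alone contributes at least $\gamma$. This is not yet a contradiction, since $\flt(k)$ exceeds $1$ and might exceed $\gamma$.

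This is the main obstacle. The fix is to note that hollowness does not depend on $\gamma$. By the argument in Proposition~\ref{prop:equivalencies}, $\cZ_{\lambda,\infty}(\bn,\bs)$ is the union of the $\bn$-translates $\cZ_{\lambda,1}(\bn,\bs)+j\bn$, whose integer-point structures are identical. The same argument shows that hollowness of $\cZ_{\lambda,\gamma}$ for one $\gamma\ge1$ is equivalent to hollowness of $\cZ_{\lambda,\infty}$, and hence to hollowness of $\cZ_{\lambda,\gamma'}$ for every $\gamma'\ge 1$. I would spell out this interior-point version of the proposition carefully.

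Then apply the flatness theorem to $\cZ_{\lambda,\gamma'}$ with $\gamma' > \flt(k)$. Any direction with $\bm\cdot\bn\neq 0$ has width greater than $\flt(k)$, so the flat direction must satisfy $\bm\cdot\bn = 0$. With that term gone, the width bound reads $(1-2\lambda)\onenorm{\bm}\le \flt(k)$, which is the claim; also $\onenorm{\bm}>0$ since $\bm\ne\bzero$.

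An alternative route is to apply flatness to the unbounded hollow body $\cZ_{\lambda,\infty}$, which is not compact. I would avoid it and use large finite $\gamma'$ as above. One point needs care: the interior of $\cZ_{\lambda,\gamma'}$ sits inside the interior of $\cZ_{\lambda,\infty}$, which is what transfers hollowness.

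Finally, Theorem~\ref{thm:mainlr}(b) follows from this result via Proposition~\ref{prop:equivalencies}, taking $\lambda=\frac1{k+1}$ and $\bs=\bzero$, for which $1/(1-2\lambda)=\frac{k+1}{k-1}$. In the counterexample case $\lon(\bn)<\lambda$ the zonotope has no integer points at all, so it is trivially hollow.
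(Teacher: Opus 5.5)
Your proposal is correct and is essentially the paper's proof. You compute the width of the zonotope in direction $\bm$ as $(1-2\lambda)\onenorm{\bm}+\gamma\,|\bm\cdot\bn|$, use that hollowness does not depend on $\gamma\ge1$ to enlarge $\gamma$ until every direction with $\bm\cdot\bn\neq0$ is too wide, and read off the bound from the flatness theorem. The only differences are cosmetic: the paper takes $\gamma=\flt(k)$ exactly, which already suffices because $\bm\neq\bzero$ makes the $(1-2\lambda)\onenorm{\bm}$ term strictly positive, whereas you take $\gamma'>\flt(k)$; and your plan to spell out the interior-point version of Proposition~\ref{prop:equivalencies} makes explicit a step the paper only asserts.
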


Theorem~\ref{thm:mainlr}(b) follows, via Proposition~\ref{prop:equivalencies}, as the special case $\lambda = \frac{ 1 }{ k+1 }$ and $\bs = 0$. 
As in that result, we remark that we could replace $\flt(k)$ in the statement of Theorem~\ref{thm:maingeom} by the 
flatness constant for symmetric polytopes or that for (our special class of) zonotopes.

\begin{proof}
We first note that by Proposition~\ref{prop:equivalencies}, for fixed $\lambda$, $\bn$, and $\bs$, all $\cZ_{\lambda, \gamma}(\bn, \bs)$ are simultaneously hollow (or not) for any $\gamma \ge 1$.

From the zonotope structure~\eqref{eq:zonostructure}, we immediately compute
\begin{align*}
  \width \left( \cZ_{\lambda, \gamma}(\bn, \bs) \right)
  \, &= \min_{ \bm \in \ZZ^k \setminus \bzero } \ \max_{ \ba, \bb \in \cZ_{\lambda, \gamma}(\bn, \bs) } \bm \cdot
(\ba - \bb) \\
     &= \min_{ \bm \in \ZZ^k \setminus \bzero } \left( |m_1| \dots + |m_k| \right) (1 - 2 \lambda) + \gamma \, |\bm \cdot \bn| \, .
\end{align*}
Now assume $\cZ_{\lambda, \gamma}(\bn, \bs)$ is hollow; then its lattice width is bounded by $\flt(k)$ and,
consequently, when computing the above minimum we can ignore $\bm$ with $|\bm \cdot \bn| \ge 1$ if we choose $\gamma
= \flt(k)$. Thus, with this choice of $\gamma$,
\[
  \width \left( \cZ_{\lambda, \gamma}(\bn, \bs) \right)
  \, = \min_{ {\bm \in \ZZ^k \setminus \bzero} \atop { \bm \cdot \bn = 0 } } (1 - 2 \lambda) \onenorm \bm 
  \, \le \, \flt(k) \, . \qedhere
\]
\end{proof}

We repeat our earlier interpretation, this time of Theorem~\ref{thm:maingeom}, that all $\bn \in \ZZ^k$ for which
$\cZ_{\lambda, \gamma}(\bn, \bs) \cap \ZZ^k = \varnothing$ (equivalently, $\lon(\bn, \bs) < \lambda$) live in the
(finite) union of hyperplanes
\[
  \bigcup \left\{ \bm \cdot \bx = 0 : \, \bm \in \ZZ^k , \ 0 < \onenorm \bm \le \frac{ \flt(k) }{ (1 - 2 \lambda) } \right\} .
\]
This immediately implies a shifted generalization of the aforementioned theorem of Czerwi\'nski~\cite{czerwinski}.

\begin{corollary}\label{cor:prob}
Let $0 < \lambda < \frac 1 2$, $\gamma \ge 1$, and $\bs \in \RR^k$.
Construct $\bn \in \ZZ^k$ by choosing its $k$ distinct entries from $\{ 1, 2, \dots, j \}$ uniformly at random.
Then the probability that $\cZ_{\lambda, \gamma}(\bn, \bs) \cap \ZZ^k \ne \varnothing$, or equivalently that $\lon(\bn,
\bs) \ge \lambda$, tends to $1$ as $j \to \infty$.
\end{corollary}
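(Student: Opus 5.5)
We argue by the contrapositive of Theorem~\ref{thm:maingeom}, combined with a counting estimate over the finitely many hyperplanes it produces. Set $B := \flt(k)/(1-2\lambda)$, which depends only on $k$ and $\lambda$ and not on $j$, $\bn$, $\bs$, or $\gamma$. Suppose $\cZ_{\lambda,\gamma}(\bn,\bs) \cap \ZZ^k = \varnothing$. An empty body has no interior integer points, so in particular it is hollow. Theorem~\ref{thm:maingeom} then yields some $\bm \in \ZZ^k$ with $0 < \onenorm{\bm} \le B$ and $\bm \cdot \bn = 0$. Note that Theorem~\ref{thm:maingeom} is stated for hollow zonotopes, and its proof only uses that the lattice width is at most $\flt(k)$; this holds a fortiori when there are no integer points at all. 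By Proposition~\ref{prop:equivalencies}, the event we care about does not depend on $\gamma \ge 1$, and it is equivalent to $\lon(\bn,\bs) \ge \lambda$. So the ``equivalently'' part of the statement is immediate.

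Therefore
\[
\Pr\bigl[\cZ_{\lambda,\gamma}(\bn,\bs)\cap\ZZ^k=\varnothing\bigr] \, \le \sum_{0<\onenorm{\bm}\le B} \Pr[\bm\cdot\bn=0] \, .
\]
The sum runs over a finite set of size at most $(2B+1)^k$, independent of $j$. It therefore suffices to show that for each fixed nonzero $\bm$, $\Pr[\bm\cdot\bn=0] \to 0$ as $j \to \infty$.

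For this, choose an index $i$ with $m_i \ne 0$ and condition on the entries $n_\ell$ with $\ell \ne i$. The equation $\bm\cdot\bn=0$ then determines $n_i$ uniquely as $-\frac{1}{m_i}\sum_{\ell\ne i} m_\ell n_\ell$. Given the other entries, $n_i$ is uniformly distributed over the $j-k+1$ values of $\{1,\dots,j\}$ not already used. Hence the conditional probability is at most $\frac{1}{j-k+1}$, and so is the unconditional one. Summing over $\bm$ gives
\[
\Pr\bigl[\cZ_{\lambda,\gamma}(\bn,\bs)\cap\ZZ^k=\varnothing\bigr] \, \le \, \frac{(2B+1)^k}{j-k+1} \, \to \, 0 \, ,
\]
which proves the claim.

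The argument has no real obstacle. The one point to handle carefully is that Theorem~\ref{thm:maingeom} applies to the empty-intersection case. Also note the interpretation of ``$k$ distinct entries chosen uniformly at random'': whether $\bn$ is an ordered tuple or a random subset in some fixed order, the conditioning argument gives the same bound.
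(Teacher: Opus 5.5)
Your proof is correct and is essentially the argument the paper leaves implicit when it says the corollary follows immediately. The paper's reasoning is that every $\bn$ with empty intersection lies on one of the finitely many hyperplanes $\bm\cdot\bx=0$ with $0<\onenorm{\bm}\le \flt(k)/(1-2\lambda)$, and each such hyperplane captures only an $O(1/j)$ fraction of the choices; your union bound makes this explicit. One small correction to your closing remark: if $\bn$ is a random subset listed in increasing order, then given the other entries $n_i$ is not uniform on the $j-k+1$ unused values, so your conditioning step does not apply directly. The bound still holds, because the set of admissible $\bm$ is permutation-invariant, so the relation event has the same probability as for a uniformly random ordered tuple.
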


% --------------------

\section{Open Problems}\label{sec:open}

Aside from LRC, there are other, arguably more mundane problems that present themselves through the above lines of arguments.

Since the shifted LRC does not hold for $k \ge 5$, one should try to compute $\inf_{ \bn, \bs } \lon(\bn, \bs)$ in small dimensions (for $\bn$ with distinct entries).

A slightly more vague, but we believe interesting, question is how $\lon(\bn, \bs)$ behaves statistically on a given hyperplane $\bm \cdot \bx = 0$, for some $\bm \in \ZZ^k$ with small 1-norm.

There are many special families for $\bn$ know for which LRC is true, for example, lacunary sequences, where
consecutive entries of $\bn$ form a ratio with some prescribed lower bound. Can these known theorems be re-proved from
our results?

On the geometric side, it would be nice to find explicit bounds for the flatness constants of lonely runner
zonotopes, even if only in small dimensions. The natural place to start seems to be tight instances of LRC, but for
the known cases, the associated zonotopes all have lattice width $< 3$.
% I've always wanted to sneak a heart like this into a math paper.

Finally, and again a bit more vaguely, our results invite the question whether there are any connections to Schmidt's
\emph{subspace theorem}~\cite{schmidtsubspacethm} that points of small height in projective space lie in a finite number of hyperplanes; note that it has a consequence for Diophantine approximation. 

% --------------------

% \newpage
\bibliographystyle{amsplain}
\bibliography{bib}

\setlength{\parskip}{0cm} 

\end{document}